\newtheorem{theorem}{Theorem}[section]
\newtheorem{proposition}[theorem]{Proposition}
\newtheorem{lemma}[theorem]{Lemma}
\theoremstyle{definition}
\newtheorem{definition}[theorem]{Definition}
\newtheorem{remark}[theorem]{Remark}
\title{Spherical 2-Designs from Finite Group Orbits}
\author{Kuan-Cheng Chien and Ming-Hsuan Kang}
\date{\today}
\begin{document}

\maketitle

\begin{abstract}
We classify all spherical 2-designs that arise as orbits of finite group actions on real inner product spaces. Although it is well known that such designs can occur in representations without trivial components, we give a complete characterization of the orbits that satisfy the second-moment condition. In particular, we show that these orbits correspond to projections of compact group orbits within the regular representation, and we provide an explicit classification via isotypic decomposition and moment conditions. This approach unifies geometric and representation-theoretic viewpoints on highly symmetric point configurations.
\end{abstract}

\section{Introduction}

Spherical designs are finite subsets of the unit sphere that exhibit high degrees of symmetry and uniform distribution. A finite set \( X \subset S^{n-1} \subset \mathbb{R}^n \) is called a \emph{spherical \( t \)-design} if it exactly integrates all polynomials \( f \in P_{\le t}(\mathbb{R}^n) \) of degree at most \( t \):
\[
\frac{1}{|X|} \sum_{x \in X} f(x) = \int_{S^{n-1}} f(x)\, d\mu(x),
\quad 
\text{where} \quad d\mu \text{ is the normalized rotationally invariant measure.}
\]
First introduced by Delsarte, Goethals, and Seidel~\cite{delsarte1977spherical}, spherical designs have become a fundamental tool in approximation theory~\cite{womersley2018efficient}, energy minimization~\cite{cohn2007universally}, and the study of highly symmetric configurations~\cite{bannai2009survey}. Classical constructions and generalizations appear in the pioneering work of Bannai~\cite{bannai1984spherical,bannai1979tight,bannai1980tight} as well as in Bajnok~\cite{bajnok1991construction,bajnok1992construction}, Hoggar~\cite{hoggar1990t}, Sobolev~\cite{sobolev1962cubature}, and Goethals–Seidel~\cite{goethals1981cubature}. A clear overview connecting these ideas to finite group representations is provided by Bannai’s survey with Bannai~\cite{bannai2009survey} and by de la Harpe and Pache~\cite{delaharpe2005spherical}.

Among the many families of spherical designs, those that arise as orbits of finite group actions are particularly tractable and geometrically natural. Given a finite group \( G \subset \mathrm{O}(n) \) acting on \( \mathbb{R}^n \) and a unit vector \( v \in S^{n-1} \), the orbit \( Gv \) forms a finite, symmetric subset of the sphere. This motivates the central question of this paper:

\begin{center}
\emph{Which group orbits \( Gv \subset S^{n-1} \) form spherical 2-designs, and how can they be classified?}
\end{center}

While spherical \( t \)-design orbits with \( t \ge 4 \) are extremely rare, an important structural fact is that if a finite group \( G \) admits a 4-design orbit, then \emph{all} its orbits must be spherical 2-designs — as shown by Bannai~\cite{bannai1984spherical}. This condition is equivalent to the standard representation of \( G \) being real irreducible, as explained by de la Harpe and Pache~\cite{delaharpe2005spherical}

Consequently, only the cases \( t = 2 \) and \( t = 3 \) allow much more flexibility. In particular, spherical 2-designs strike a compelling balance between geometric regularity and algebraic tractability, making them the natural focus of this work.

A classical result of Bannai~\cite{bannai1984spherical} provides a necessary condition: for an orbit \( \Omega = Gv \subset S^{n-1} \) to be a spherical \( t \)-design, the associated permutation representation must contain all harmonic representations of degree up to \( \lfloor t/2 \rfloor \). These harmonic representations arise from the action of \( G \subset \mathrm{O}(n) \) on the space of homogeneous harmonic polynomials (see also de la Harpe–Pache~\cite{delaharpe2005spherical} for accessible summaries).

Since any permutation representation embeds into the regular representation \( \mathbb{R}[G] \)—and coincides with it for regular orbits—this condition becomes a concrete algebraic constraint: the regular representation must contain all harmonic components of degree up to \( \lfloor t/2 \rfloor \).

For \( t = 2 \), Bannai’s condition reduces to requiring that the standard representation of \( G \) lies entirely within the orthogonal complement of the trivial component in \( \mathbb{R}[G] \), denoted \( \mathbb{R}[G]_0 \). This subspace contains all nontrivial isotypic components and provides the natural setting for our analysis.

Each isotypic component of \( \mathbb{R}[G]_0 \) corresponds to a real irreducible representation of \( G \) and can be identified with a matrix space \( \mathrm{Mat}_n(D) \), where \( D \in \{ \mathbb{R}, \mathbb{C}, \mathbb{H} \} \) reflects its Frobenius–Schur type. Within these matrix models, the second-moment conditions for spherical 2-designs translate into explicit algebraic constraints involving Frobenius norms and \( D \)-Hermitian inner products.

However, this representation-theoretic condition is not sufficient on its own: only certain orbits actually satisfy the geometric second-moment condition required for a true spherical 2-design.

\medskip

The main goal of this paper is to classify all group orbits \( Gv \subset \mathbb{R}[G]_0 \) that form spherical 2-designs. Our approach relies on two key ingredients:
\begin{itemize}
  \item A decomposition principle showing how global 2-designs decompose into dimension-weighted combinations of local designs in orthogonal \( G \)-invariant subspaces;
  \item A characterization of 2-designs within each isotypic component via explicit moment conditions on the associated matrix data.
\end{itemize}

Together, these yield the following classification result:

\begin{theorem}[Classification of Spherical 2-Design Orbits]\label{thm:main2}
Let \( V \subset \mathbb{R}[G]_0 \) be a real representation of a finite group \( G \). Then a \( G \)-orbit \( Gv \subset S(V) \) forms a spherical 2-design if and only if:
\begin{itemize}
  \item[(i)] For each isotypic component \( V_i \subset V \), there exists a matrix \( M_i \in \mathrm{Mat}_{n_i \times m_i}(D_i) \) satisfying
  \[
  \overline{M}_i^\top M_i = \frac{1}{m_i} I_{m_i}
  \quad \text{and} \quad
  \pi_{V_i}(v) = \sqrt{ \frac{\dim V_i}{\dim V} } \, M_i.
  \]
  \item[(ii)] The full vector \( v \) is the direct sum of the scaled components:
  \[
  v = \bigoplus_i \sqrt{ \frac{\dim V_i}{\dim V} } \, M_i.
  \]
\end{itemize}

Equivalently, every spherical 2-design orbit in \( V \subset \mathbb{R}[G]_0 \) arises as the image of a group orbit
\[
G \Big( \bigoplus_i \sqrt{ \frac{n_i d_i}{\dim V} } \cdot \pi_{n_i, m_i}^{D_i}(U_i) \Big),
\quad \text{where } U_i \in \mathrm{O}_{D_i}(n_i).
\]
\end{theorem}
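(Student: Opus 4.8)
The plan is to reduce the 2-design condition to a pair of moment identities and then diagonalize the relevant moment operator using Schur's lemma over the Frobenius--Schur division algebras. Recall that a finite set $X\subset S(V)$ is a spherical 2-design precisely when its normalized first moment $\frac1{|X|}\sum_{x\in X}x$ vanishes and its normalized second moment satisfies $\frac1{|X|}\sum_{x\in X}xx^\top=\frac1{\dim V}I_V$. For an orbit $X=Gv$ it is convenient to replace the average over distinct points by the $G$-average $\frac1{|G|}\sum_{g\in G}$; since every orbit point is hit equally often (by $|\mathrm{Stab}(v)|$ group elements), these two averages coincide, so the conditions are unchanged. The first-moment average equals the projection of $v$ onto the trivial isotypic component, which vanishes automatically because $V\subset\mathbb R[G]_0$ contains no trivial component. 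Thus the entire content of the 2-design condition is carried by the symmetric positive-semidefinite operator
\[
\mathcal M \;=\; \frac1{|G|}\sum_{g\in G}(gv)(gv)^\top,
\]
and $Gv$ is a 2-design if and only if $\mathcal M=\frac1{\dim V}I_V$.

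The second step exploits that $\mathcal M$ is $G$-equivariant: reindexing the average gives $\rho(h)\mathcal M\rho(h)^\top=\mathcal M$ for every $h\in G$. By the real form of Schur's lemma, $\mathcal M$ preserves each isotypic component $V_i$, and in the matrix model $V_i\cong\mathrm{Mat}_{n_i\times m_i}(D_i)$ (with $G$ acting by left multiplication through the $D_i$-unitary irreducible $\rho_i$) its restriction is right multiplication, on the multiplicity index, by a $D_i$-Hermitian positive-semidefinite matrix $A_i\in\mathrm{Mat}_{m_i}(D_i)$. Hence $\mathcal M=\frac1{\dim V}I_V$ holds if and only if $A_i=\frac1{\dim V}I_{m_i}$ for every $i$; this is the decomposition principle. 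Taking operator traces, and using that the real operator-trace of right multiplication by $A_i$ equals $n_id_i\,\mathrm{Re}\,\mathrm{tr}(A_i)$ while $\mathrm{tr}(\mathcal M|_{V_i})=\lVert\pi_{V_i}(v)\rVert^2$, then forces the dimension weighting $\lVert\pi_{V_i}(v)\rVert^2=\dim V_i/\dim V$.

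The third and most delicate step is to compute $A_i$ explicitly in terms of $M_i:=\pi_{V_i}(v)$. Writing out $\mathcal M|_{V_i}(W)=\frac1{|G|}\sum_g\rho_i(g)M_i\,\langle\rho_i(g)M_i,W\rangle$ and applying the Schur orthogonality relations for the matrix coefficients of $\rho_i$ over $D_i$, the sum over $g$ contracts to a single trace and yields $A_i=\tfrac1{n_id_i}\,\overline M_i^\top M_i$. I expect this to be the main obstacle: the orthogonality relations must be set up uniformly across $D_i\in\{\mathbb R,\mathbb C,\mathbb H\}$, keeping careful track of the real-part-of-trace inner product, the conjugate-transpose convention, and the non-commutativity in the quaternionic case, so that the single constant $1/(n_id_i)$ emerges correctly in all three types. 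Combining this formula with $A_i=\frac1{\dim V}I_{m_i}$ gives $\overline M_i^\top M_i=\frac{n_id_i}{\dim V}I_{m_i}$; substituting $M_i=\sqrt{\dim V_i/\dim V}\,\tilde M_i$ with $\dim V_i=n_im_id_i$ converts this into the normalized condition $\overline{\tilde M}_i^\top\tilde M_i=\frac1{m_i}I_{m_i}$ of part (i), and assembling the components over $i$ gives part (ii). Because this computation is an identity, both directions of the equivalence follow at once.

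Finally, I would translate condition (i) into the orbit parametrization. The relation $\overline{\tilde M}_i^\top\tilde M_i=\frac1{m_i}I_{m_i}$ says exactly that the columns of $\sqrt{m_i}\,\tilde M_i$ are $D_i$-orthonormal; such a matrix exists only when $m_i\le n_i$, and in that case $\sqrt{m_i}\,\tilde M_i$ is precisely the array of the first $m_i$ columns of some $U_i\in\mathrm O_{D_i}(n_i)$, i.e. $\tilde M_i=\tfrac1{\sqrt{m_i}}\,\pi^{D_i}_{n_i,m_i}(U_i)$. Feeding this back through the scaling of part (ii) collapses $\sqrt{\dim V_i/\dim V}\cdot\tfrac1{\sqrt{m_i}}$ to $\sqrt{n_id_i/\dim V}$, producing exactly the claimed image $G\bigl(\bigoplus_i\sqrt{n_id_i/\dim V}\,\pi^{D_i}_{n_i,m_i}(U_i)\bigr)$; conversely, every such choice of unitaries satisfies (i) and hence yields a 2-design, which completes the classification.
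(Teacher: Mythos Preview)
Your approach is correct and reaches the same conclusion, but it is packaged somewhat differently from the paper. The paper proceeds in two separated stages: first (Section~3.2) it proves a combination proposition and a projection proposition that together establish a bijection between global 2-designs in $V$ and tuples of componentwise 2-designs in the isotypic summands---the projection half going through harmonic polynomials to extract the norm identity $\lVert\pi_{V_i}(v)\rVert^2=\dim V_i/\dim V$---and second (Section~3.3) it classifies 2-designs inside a single isotypic component by an entrywise verification of the second-moment identity, using the $D$-valued Schur orthogonality formalism with partial conjugation maps $\tau_Y$ developed in Section~2. You collapse the first stage into the single observation that the second-moment operator $\mathcal M$ is $G$-equivariant, so Schur's lemma forces it to be block-diagonal over the isotypic decomposition with each block acting as right multiplication by some Hermitian $A_i\in\mathrm{Mat}_{m_i}(D_i)$; this is more streamlined than the paper's pair of propositions and sidesteps the harmonic-polynomial detour, while the trace computation you use to recover the dimension weights is the same in spirit. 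Your second stage---computing $A_i=\tfrac{1}{n_id_i}\overline{M}_i^\top M_i$ from the orthogonality relations---is essentially the same calculation the paper carries out in its isotypic classification theorem, and you correctly flag the quaternionic case as the place where conventions (order of factors, conjugate-transpose, real-part-of-trace pairing) must be tracked with care; the paper's $\tau_Y$-conjugation identities are precisely the machinery that makes this uniform across all three Frobenius--Schur types.
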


A key tool in the proof is a unified \( D \)-valued formalism for Schur orthogonality, which recasts real orthogonality conditions in terms of \( D \)-valued identities. This framework is essential for verifying the moment conditions across different Frobenius–Schur types.

\medskip

The remainder of the paper is organized as follows:

\begin{itemize}
  \item Section~2 develops the \( D \)-valued formalism and proves the \( D \)-equivalence theorem.
  \item Section~3 formulates the second-moment condition for group orbits and builds toward a full classification. Section~3.2 establishes the bijection between global designs and tuples of local designs in orthogonal subspaces. Section~3.3 characterizes 2-designs within single isotypic components using explicit matrix conditions. Section~3.4 combines these results to prove Theorem~\ref{thm:main2}.
\end{itemize}

\section{Preliminaries on Real Schur Orthogonality and Its $D$-Valued Formalism}

This section develops a unified $D$-valued framework for Schur orthogonality in real representations, treating all Frobenius–Schur types \( D \in \{ \mathbb{R}, \mathbb{C}, \mathbb{H} \} \) in parallel. While the real form of Schur orthogonality is generally assumed known, we are not aware of an explicit formulation in the literature that applies uniformly across all types. For completeness, we present a precise statement and sketch its proof.

We equip each irreducible real representation \( V \cong D^n \) with a $G$-invariant $D$-valued inner product and analyze matrix coefficients via real bases derived from $D$-orthonormal systems. This approach allows for a coherent treatment of orthogonality relations and translates real inner product identities into uniform $D$-valued formulations.

\subsection{Matrix Coefficients from a $D$-Orthonormal Basis}

Let \( (\rho, V) \) be an irreducible real representation of a finite group \( G \), with \( D := \mathrm{End}_G(V)^{\mathrm{op}} \in \{ \mathbb{R}, \mathbb{C}, \mathbb{H} \} \). View \( V \cong D^n \) as a right \( D \)-module, equipped with a \( G \)-invariant $D$-valued inner product.

Fix a right $D$-orthonormal basis \( \{e_1, \dots, e_n\} \). Let \( \mathcal{I}_D \) denote the standard real basis of \( D \), and define the real orthonormal basis
\[
\mathcal{B} := \{ e_j^{(X)} := e_j \cdot X \mid 1 \le j \le n,\ X \in \mathcal{I}_D \}.
\]

For each \( g \in G \), we write:
\[
\rho(g)(e_k) = \sum_i e_i \cdot a_{ik}(g), \quad
a_{ik}(g) = \sum_{X \in \mathcal{I}_D} \rho_{ik}^{(X)}(g) \cdot X,
\]
where \( \rho_{ik}^{(X)}(g) \in \mathbb{R} \) are the real matrix coefficients. Then the action on the real basis \( \mathcal{B} \) becomes:
\[
\rho(g)(e_k^{(Y)}) = \sum_{i,X} \rho_{ik}^{(X)}(g)\, e_i^{(XY)}.
\]
We adopt the convention \( e_i^{(XY)} := -e_i^{(-XY)} \) when \( XY \in -\mathcal{I}_D \), ensuring that the transformation is expressed with real coefficients in \( \mathcal{B} \).

\subsection{Structure and Trace of $D$-Equivariant Operators}

Each \( \lambda \in D^{\mathrm{op}} \) acts on \( V \cong D^n \) by right multiplication \( R_\lambda(v) = v \cdot \lambda \), yielding a real-linear operator whose structure is as follows:

\begin{theorem}[Trace Formula for $D$-Linear Operators] \label{realtrace}
In the real basis \( \mathcal{B} \), the matrix of \( R_\lambda \) consists of \( d \) identical diagonal blocks of size \( n \times n \), each equal to \( \operatorname{Re}(\lambda) \cdot I_n \). In particular,
\[
\operatorname{tr}_{\mathbb{R}}(R_\lambda) = n \cdot \operatorname{Tr}_D(\lambda), \quad \text{where } \operatorname{Tr}_D(\lambda) := d \cdot \operatorname{Re}(\lambda).
\]
\end{theorem}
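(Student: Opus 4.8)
The plan is to compute the matrix of $R_\lambda$ directly on the real basis $\mathcal{B}$, organized into the $d$ blocks obtained by grouping the vectors $e_1^{(X)}, \dots, e_n^{(X)}$ for each fixed $X \in \mathcal{I}_D$, and then to read off the diagonal blocks. Since $R_\lambda$ is right multiplication and each $e_j^{(X)} = e_j \cdot X$ is itself a right translate, the fundamental computation is, by the right-module axiom,
\[
R_\lambda\bigl(e_j^{(X)}\bigr) = (e_j \cdot X)\cdot \lambda = e_j \cdot (X\lambda).
\]
Writing $\lambda = \operatorname{Re}(\lambda)\cdot 1 + \lambda'$ with $\lambda' := \sum_{Y \in \mathcal{I}_D \setminus \{1\}} \lambda_Y\, Y$ the purely imaginary part, and using $\mathbb{R}$-linearity, I would split the operator as $R_\lambda = \operatorname{Re}(\lambda)\, R_1 + R_{\lambda'}$ and treat the two pieces separately.

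The scalar piece is immediate: $R_1 = \mathrm{id}_V$ fixes every $e_j^{(X)}$, so $\operatorname{Re}(\lambda)\, R_1$ contributes exactly $\operatorname{Re}(\lambda)\, I_n$ to the block indexed by each $X$, accounting for all $d$ diagonal blocks. The entire content of the theorem is therefore that $R_{\lambda'}$ makes no contribution to any diagonal block. For this I would establish the one genuine step of the argument, a combinatorial lemma on the unit structure of $D$: for every $X \in \mathcal{I}_D$ and every $Y \in \mathcal{I}_D \setminus \{1\}$ one has $XY = \pm Z$ for some $Z \in \mathcal{I}_D$ with $Z \neq X$. The clean uniform proof is to work in the finite unit group $\pm\mathcal{I}_D$ (namely $\{\pm 1\}$, $\{\pm 1, \pm i\}$, or the quaternion group $Q_8$), in which $\{\pm 1\}$ is central and the $d$ cosets $\{\pm X\}$ are permuted by right multiplication: a coset is fixed, i.e. $\{\pm XY\} = \{\pm X\}$, precisely when $XY = \pm X$, hence $Y = \pm 1$; since $Y \in \mathcal{I}_D\setminus\{1\}$ this is excluded, so multiplication by $Y$ fixes no coset and $XY$ indeed represents a coset $\{\pm Z\}$ with $Z \neq X$.

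Granting the lemma, $R_{\lambda'}\bigl(e_j^{(X)}\bigr) = e_j\cdot(X\lambda') = \sum_{Y \neq 1}\lambda_Y\,(e_j\cdot XY)$ lies, after applying the sign convention $e_j^{(XY)} := -e_j^{(-XY)}$ fixed in Section~2.1, in the real span of $\{\, e_j^{(Z)} : Z \in \mathcal{I}_D,\ Z \neq X \,\}$. Thus the $X$-block projection of $R_{\lambda'}$ vanishes for every $X$, so all $d$ diagonal blocks of $R_\lambda$ equal $\operatorname{Re}(\lambda)\, I_n$, as claimed. Taking the real trace, only the diagonal blocks contribute, and
\[
\operatorname{tr}_{\mathbb{R}}(R_\lambda) = d \cdot \operatorname{tr}\bigl(\operatorname{Re}(\lambda)\, I_n\bigr) = d\, n\, \operatorname{Re}(\lambda) = n\cdot \operatorname{Tr}_D(\lambda).
\]

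The main obstacle is really only the uniform phrasing of the combinatorial lemma, since a naive treatment would split into three separate Cayley-table checks for $\mathbb{R}$, $\mathbb{C}$, and $\mathbb{H}$; the coset-permutation viewpoint above avoids this and covers all Frobenius–Schur types at once. I would also remark that the off-diagonal blocks of $R_{\lambda'}$ need not vanish (for $D = \mathbb{C}$ they are $\pm\lambda_i\, I_n$), but they are irrelevant to the trace, so the block-diagonal description should be read as a statement about the $d$ diagonal blocks. Everything else is routine bookkeeping with the sign convention already in place.
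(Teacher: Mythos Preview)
Your proof is correct and follows the same approach as the paper's sketch: compute $R_\lambda(e_j^{(X)}) = e_j \cdot (X\lambda)$ directly on the basis $\mathcal{B}$ and read off the diagonal blocks. Your coset-permutation argument in $\pm\mathcal{I}_D$ makes explicit the one step the paper leaves as an assertion, and your closing remark that the off-diagonal blocks need not vanish is a helpful clarification of how the theorem's wording should be read.
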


\begin{proof}[Sketch]
Since \( R_\lambda(e_j^{(X)}) = e_j \cdot (X \lambda) \), the action preserves each \( D \)-fiber and gives a block-diagonal matrix with \( d \) identical \( n \times n \) blocks. The trace computation follows directly.
\end{proof}

\subsection{Real Schur Orthogonality}

We define a unified $D$-valued inner product for functions \( f_1, f_2 : G \to D \) as
\[
\langle f_1, f_2 \rangle_G := \frac{1}{|G|} \sum_{g \in G} f_1(g) \cdot \overline{f_2(g)}.
\]
When \( f_1, f_2 \in \mathbb{R} \subset D \), this reduces to the standard real inner product in \( L^2(G) \). All coefficient functions will be interpreted using this unified notation.

Let \( (\rho, V) \) be an irreducible real representation with \( \mathrm{End}_G(V) \cong D^{\mathrm{op}} \). Let \( \{e_j\} \) be a $D$-orthonormal basis and \( \mathcal{I}_D \) a real basis of \( D \). Define real-valued matrix coefficients \( \rho_{ij}^{(X)} : G \to \mathbb{R} \).

\begin{theorem}[Real Schur Orthogonality]
The functions \( \rho_{ij}^{(X)} \) form an orthogonal set in \( L^2(G) \), with
\[
\left\langle \rho_{ij}^{(X)}, \rho_{k\ell}^{(Y)} \right\rangle_G = \frac{1}{nd} \delta_{ik} \delta_{j\ell} \delta_{XY}.
\]
\end{theorem}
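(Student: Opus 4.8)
The plan is to reduce the $D$-valued statement to the classical averaging (Schur) argument applied to the underlying real orthogonal representation $P\colon G\to\mathrm{O}(nd)$, where $P(g)$ denotes the matrix of $\rho(g)$ in the real orthonormal basis $\mathcal{B}$. First I would record that taking $Y=1$ (the unit of $\mathcal{I}_D$) in the action formula gives $\rho(g)(e_j)=\sum_{i,X}\rho_{ij}^{(X)}(g)\,e_i^{(X)}$, so that the real coefficients are literally entries of $P(g)$: $\rho_{ij}^{(X)}(g)=P(g)_{(i,X),(j,1)}$. Since the pairing $\langle f_1,f_2\rangle_G$ restricted to real-valued functions is the ordinary $L^2(G)$ inner product, and since $P(g)$ is orthogonal (so $P(g)^{-1}=P(g)^{\top}$), I can rewrite
\[
\left\langle \rho_{ij}^{(X)},\rho_{k\ell}^{(Y)}\right\rangle_G=\frac{1}{|G|}\sum_{g}P(g)_{(i,X),(j,1)}\,P(g)_{(k,Y),(\ell,1)}=\bigl(T_{E}\bigr)_{(i,X),(k,Y)},
\]
where $E=E_{(j,1),(\ell,1)}$ is an elementary matrix and $T_A:=\frac{1}{|G|}\sum_{g}\rho(g)\,A\,\rho(g)^{-1}$. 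Thus the whole orthogonality table is encoded in a single equivariant averaging operator.

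The next step is to identify $T_A$ with the orthogonal projection of $A$ onto the commutant. A standard check shows $T_A$ is $G$-equivariant, hence $T_A\in\mathrm{End}_G(V)$; that $T$ fixes $\mathrm{End}_G(V)$ pointwise (equivariant matrices are unchanged by averaging); and that $T$ is self-adjoint for the Hilbert–Schmidt form $\langle A,B\rangle_{\mathrm{HS}}:=\operatorname{tr}(A^{\top}B)$, using orthogonality of $P(g)$ and cyclicity of the trace. These three facts force $T_A=\operatorname{proj}_{\mathrm{End}_G(V)}(A)$. By Schur's lemma in its real form, $\mathrm{End}_G(V)=\{R_\lambda:\lambda\in D\}$, the right multiplications. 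To expand the projection I would produce an orthonormal basis of this commutant: from $R_\xi^{\top}=R_{\bar\xi}$ and $R_{\bar\xi}R_\eta=R_{\eta\bar\xi}$ together with the Trace Formula (Theorem~\ref{realtrace}), one obtains $\langle R_\xi,R_\eta\rangle_{\mathrm{HS}}=n\,\operatorname{Tr}_D(\eta\bar\xi)=nd\,\operatorname{Re}(\eta\bar\xi)=nd\,\delta_{\xi\eta}$ for $\xi,\eta\in\mathcal{I}_D$, so $\{R_\xi/\sqrt{nd}\}$ is orthonormal.

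Finally I would compute the projection explicitly. Writing $T_E=\sum_{\xi\in\mathcal{I}_D}\frac{\langle R_\xi,E\rangle_{\mathrm{HS}}}{nd}\,R_\xi$ and using $\langle R_\xi,E_{(j,1),(\ell,1)}\rangle_{\mathrm{HS}}=(R_\xi)_{(j,1),(\ell,1)}$, I note that $R_\xi(e_\ell^{(1)})=e_\ell\cdot\xi$, whose $e_j^{(1)}$-coefficient is $\delta_{j\ell}$ times the coordinate of $\xi$ along $1\in\mathcal{I}_D$, namely $\delta_{j\ell}\delta_{\xi,1}$. Hence $T_E=\frac{\delta_{j\ell}}{nd}\,R_1=\frac{\delta_{j\ell}}{nd}\,\mathrm{Id}$, and its $((i,X),(k,Y))$ entry is $\frac{1}{nd}\delta_{ik}\delta_{j\ell}\delta_{XY}$, exactly the claimed value. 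The main obstacle is not conceptual but bookkeeping: correctly handling the sign convention $e_i^{(XY)}=-e_i^{(-XY)}$ when passing between the $D$-valued coefficients $a_{ik}(g)$, the real coefficients $\rho_{ik}^{(X)}$, and the entries of $P(g)$, and verifying the adjoint identity $R_\xi^{*}=R_{\bar\xi}$ for the $G$-invariant real inner product underlying $\mathcal{B}$. Once these conventions are pinned down, the Schur averaging step and Theorem~\ref{realtrace} deliver the result.
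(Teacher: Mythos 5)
Your proposal is correct, and it follows the same basic skeleton as the paper's proof---average an elementary matrix over \( G \), invoke Schur's lemma to place the result in \( \mathrm{End}_G(V) \cong D^{\mathrm{op}} \), and evaluate using Theorem~\ref{realtrace}---but the mechanism by which you pin down the averaged operator is genuinely different. The paper averages the elementary operators \( E_{j\ell}^{(X,Y)} \) for \emph{every} pair \( (X,Y) \in \mathcal{I}_D \times \mathcal{I}_D \), and for each one determines only \( \operatorname{Re}(\lambda) \) of the resulting \( \lambda \in D \) by a trace comparison (conjugation-averaging preserves trace, and \( \operatorname{tr}_{\mathbb{R}}(R_\lambda) = nd\operatorname{Re}(\lambda) \)); the block-structure half of Theorem~\ref{realtrace} then converts \( \operatorname{Re}(\lambda) \) into the diagonal inner products \( \langle \widetilde{T}(e_i^{(1)}), e_k^{(1)}\rangle_{\mathbb{R}} = \delta_{ik}\operatorname{Re}(\lambda) \). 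You instead average the single matrix \( E_{(j,1),(\ell,1)} \), prove that \( A \mapsto \frac{1}{|G|}\sum_g P(g)AP(g)^{-1} \) is the Hilbert--Schmidt-orthogonal projection onto the commutant (image in the commutant, fixing the commutant pointwise, self-adjointness), and expand in the explicit HS-orthonormal basis \( \{R_\xi/\sqrt{nd}\}_{\xi \in \mathcal{I}_D} \), whose normalization \( \langle R_\xi, R_\eta\rangle_{\mathrm{HS}} = nd\operatorname{Re}(\eta\bar{\xi}) = nd\,\delta_{\xi\eta} \) is exactly where Theorem~\ref{realtrace} enters. What your route buys: the entire orthogonality table (all \( X, Y \) simultaneously) falls out of one projection computation per pair \( (j,\ell) \); you need only the trace consequence of Theorem~\ref{realtrace}, not the finer statement about the diagonal blocks of \( R_\lambda \); and the conceptual content---Schur averaging is orthogonal projection onto the commutant---is made explicit. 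What it costs: you must verify the adjoint identity \( R_\xi^{*} = R_{\bar{\xi}} \) and the HS-self-adjointness of the averaging map, neither of which the paper's trace argument requires; both are true (the former because \( \operatorname{Re}\langle u\xi, v\rangle = \operatorname{Re}\langle u, v\bar{\xi}\rangle \) in all three algebras, the latter by reindexing \( g \mapsto g^{-1} \)), so the extra bookkeeping you flag is real but harmless.
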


\begin{proof}[Sketch]
Define an operator \( T := E_{j\ell}^{(X,Y)} \) on \( \mathcal{B} \), average it over \( G \) to obtain a \( G \)-invariant operator \( \widetilde{T} \in \mathrm{End}_G(V) \cong D^{\mathrm{op}} \), and compare traces to recover the inner product.
\end{proof}

\subsection{Equivalence with \texorpdfstring{$D$}{D}-Valued Orthogonality}

We now relate the real-valued matrix coefficients \( \rho_{ij}^{(X)} \colon G \to \mathbb{R} \) introduced earlier to the corresponding \( D \)-valued coefficients \( \rho_{ij} \colon G \to D \), enabling us to reformulate orthogonality relations directly in terms of \( D \)-valued inner products.

Recall that for each \( g \in G \), we have
\[
\rho_{ij}(g) = \sum_{X \in \mathcal{I}_D} \rho_{ij}^{(X)}(g)\, X.
\]
To isolate each real component algebraically, we introduce a family of \emph{partial conjugation} maps \( \tau_X \colon D \to D \), indexed by \( X \in \mathcal{I}_D \), which we now describe.

To algebraically recover the real components of \( D \)-valued functions, we introduce a family of \emph{partial conjugation maps} \( \tau_X : D \to D \) indexed by \( X \in \mathcal{I}_D \), defined as follows:
\begin{itemize}
    \item For \( D = \mathbb{R} \), define \( \tau_1 = \mathrm{id} \).
    
    \item For \( D = \mathbb{C} \), set
    \[
    \tau_1(z) = z, \quad \tau_{\mathbf{i}}(a + b\mathbf{i}) = a - b\mathbf{i}.
    \]
    That is, \( \tau_{\mathbf{i}} \) is complex conjugation.

    \item For \( D = \mathbb{H} \), define \( \tau_X(v) := X v X^{-1} \) for each imaginary unit \( X \in \mathcal{I}_D \setminus \{1\} \). This map acts as a 180$^\circ$ rotation about the axis \( X \), fixing \( \mathrm{Span}_\mathbb{R}\{1, X\} \) and flipping the orthogonal directions.
\end{itemize}

Each \( \tau_X \) is an \( \mathbb{R} \)-algebra automorphism of \( D \), and the collection \( \{ \tau_X : X \in \mathcal{I}_D \} \) encodes natural symmetries of the algebra. In what follows, we refer to the map \( \tau_X \) applied to a function \( f : G \to D \) as its \emph{partial conjugate}, and we write:
\[
f^{[X]}(g) := \tau_X(f(g)).
\]

This notation allows us to express each real-valued coefficient \( f^{(X)} \), corresponding to expansion along the basis \( \mathcal{I}_D \), in terms of conjugated functions \( f^{[Y]} \). The next lemma establishes the algebraic link between these components.

\begin{lemma}[Key Identity for \texorpdfstring{$\tau_X$}{tau}-Conjugation]
\label{lemma:tau-conj-identity}
Let \( D \in \{ \mathbb{R}, \mathbb{C}, \mathbb{H} \} \) with standard real basis \( \mathcal{I}_D \), and let \( d = \dim_{\mathbb{R}}(D) \). Then:
\begin{enumerate}
    \item \textbf{Orthogonality sum identity:}
    \[
    \sum_{X \in \mathcal{I}_D} X \cdot \overline{ \tau_Y(X) } = d \cdot \delta_{1Y}.
    \]

    \item \textbf{Inversion formula:} For any function \( f : G \to D \), the real-valued coefficient function \( f^{(X)} : G \to \mathbb{R} \) satisfies
    \[
    f^{(X)} = \frac{1}{d} \sum_{Y \in \mathcal{I}_D} \left( \frac{\tau_X(Y)}{Y} \right) \cdot \overline{X} \cdot f^{[Y]},
    \]
    where \( f^{[Y]} := \tau_Y(f) \) denotes the partial conjugate of \( f \).
\end{enumerate}
\end{lemma}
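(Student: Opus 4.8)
The plan is to establish the two parts separately, treating the three division algebras $D \in \{\R, \C, \mathbb{H}\}$ in a unified manner where possible, but falling back on explicit basis computations since $\mathcal{I}_D$ is small in every case. For part (1), the orthogonality sum identity, I would proceed by direct evaluation over the standard basis. Writing out $\sum_{X \in \mathcal{I}_D} X \cdot \overline{\tau_Y(X)}$ for each fixed $Y \in \mathcal{I}_D$, I would use the fact that each $\tau_Y$ is an $\R$-algebra automorphism fixing $\mathrm{Span}_\R\{1, Y\}$ and negating the orthogonal imaginary units. For $D = \R$ the identity is trivial since $d=1$ and $\tau_1 = \mathrm{id}$. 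For $D = \C$, I would check $Y = 1$ (giving $1\cdot\overline{1} + \mathbf{i}\cdot\overline{\mathbf{i}} = 1 + 1 = 2 = d$) and $Y = \mathbf{i}$ (giving $1\cdot\overline{1} + \mathbf{i}\cdot\overline{-\mathbf{i}} = 1 - 1 = 0$). For $D = \mathbb{H}$, the key observation is that $\overline{\tau_Y(X)} = \overline{Y X Y^{-1}}$, and since conjugation is an antiautomorphism while $\tau_Y$ fixes $1$ and $Y$ but flips the two basis elements orthogonal to $\mathrm{Span}_\R\{1,Y\}$, each term $X \cdot \overline{\tau_Y(X)}$ evaluates to a sign that sums to $4\,\delta_{1Y}$ after accounting for the behavior of real versus imaginary units.

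For part (2), the inversion formula, the strategy is to view part (1) as an orthogonality relation and invert it. Starting from the defining expansion $f = \sum_{Y \in \mathcal{I}_D} f^{(Y)} Y$, I would apply $\tau_Y$ to obtain $f^{[Y]} = \tau_Y(f) = \sum_{Z} f^{(Z)} \tau_Y(Z)$, since each $f^{(Z)}$ is real-valued and $\tau_Y$ is $\R$-linear. The goal is to solve this linear system for the real coefficients $f^{(X)}$ in terms of the partial conjugates $f^{[Y]}$. I would substitute the expansion into the claimed right-hand side, interchange the finite sums over $Y$ and $Z$, and collect the coefficient of each $f^{(Z)}$; the proof reduces to verifying that
\[
\frac{1}{d} \sum_{Y \in \mathcal{I}_D} \left( \frac{\tau_X(Y)}{Y} \right) \overline{X} \, \tau_Y(Z) = \delta_{XZ},
\]
which should follow by rearranging into a form matching the orthogonality sum identity of part (1).

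The main obstacle I anticipate is the bookkeeping in the quaternionic case, where the factor $\tau_X(Y)/Y$ in the inversion formula encodes a sign that depends on the relative position of $X$ and $Y$ among the imaginary units, and where noncommutativity means the order of multiplication matters throughout. I expect the cleanest route is to record a small multiplication-and-automorphism table for $\mathbb{H}$ (how $\tau_X$ permutes and signs the basis $\{1,\mathbf{i},\mathbf{j},\mathbf{k}\}$, and the values $\tau_X(Y)Y^{-1}$), then verify the identity of part (2) entry-by-entry against part (1). The real and complex cases will serve as sanity checks that the formula collapses correctly when $d = 1$ or $d = 2$. Once the quaternionic signs are tabulated, both parts follow by finite summation, so the difficulty is entirely organizational rather than conceptual.
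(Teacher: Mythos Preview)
The paper states this lemma without proof, so there is no reference argument to compare against. Your plan---direct case-by-case verification of part~(1) over the small basis $\mathcal{I}_D$, followed by substituting $f^{[Y]} = \sum_Z f^{(Z)}\tau_Y(Z)$ into the right-hand side of part~(2) and collecting coefficients---is correct and is the natural approach.

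One point worth sharpening: you write that the resulting sign identity
\[
\frac{1}{d}\sum_{Y\in\mathcal{I}_D}\Bigl(\frac{\tau_X(Y)}{Y}\Bigr)\,\overline{X}\,\tau_Y(Z)=\delta_{XZ}
\]
``should follow by rearranging into a form matching'' part~(1). Setting $\epsilon_{A,B}:=\tau_A(B)/B\in\{\pm1\}$, this identity becomes $\sum_Y \epsilon_{X,Y}\epsilon_{Y,Z}=d\,\delta_{XZ}$, whereas part~(1) only gives $\sum_Y \epsilon_{W,Y}=d\,\delta_{1W}$. The bridge between them is the observation that the maps $\{\tau_Y\}_{Y\in\mathcal{I}_D}$ form a group under composition, with $\tau_X\circ\tau_Z=\tau_W$ where $W\in\mathcal{I}_D$ is determined by $XZ\in\{\pm W\}$; this yields $\epsilon_{X,Y}\epsilon_{Z,Y}=\epsilon_{W,Y}$, and then symmetry $\epsilon_{Y,Z}=\epsilon_{Z,Y}$ plus part~(1) finishes it. Without this multiplicative structure, part~(1) alone gives only one row of the needed Hadamard-type orthogonality of the sign matrix, not all of it---so your fallback of tabulating the signs directly in the quaternionic case is the right safety net. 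Either route closes the argument.
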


\noindent
Note that since \( \tau_X(Y) \in \{ \pm Y \} \) for all \( X, Y \in \mathcal{I}_D \), the ratio \( \tau_X(Y)/Y \) always lies in \( \{ \pm 1 \} \). These identities show that the maps \( \tau_X \) serve as both projection and symmetry operators on \( D \). They are essential for translating real-valued orthogonality conditions for matrix coefficients into equivalent \( D \)-valued inner product identities, enabling a uniform treatment of all Frobenius--Schur types.

\begin{theorem}[Equivalence of Real and \texorpdfstring{$D$}{D}-Valued Orthogonality]
\label{d-orthogonality}
Let \( \{f_\alpha : G \to D\}_{\alpha \in I} \) be a collection of \( D \)-valued functions. For each \( X \in \mathcal{I}_D \), write
\[
f_\alpha(g) = \sum_{X \in \mathcal{I}_D} f_\alpha^{(X)}(g) X, \quad \text{with } f_\alpha^{(X)}(g) \in \mathbb{R}.
\]
Then the following are equivalent:
\begin{enumerate}
    \item The set \( \{ f_\alpha^{(X)} \} \) is orthonormal in \( L^2(G) \), i.e.,
    \[
    \left\langle f_\alpha^{(X)}, f_\beta^{(Y)} \right\rangle_G = \frac{1}{d} \delta_{\alpha\beta} \delta_{XY}.
    \]

    \item The conjugated \( D \)-valued functions satisfy
    \[
    \left\langle f_\alpha, f_\beta^{[Z]} \right\rangle_G  = \delta_{\alpha\beta} \delta_{1Z}.
    \]
\end{enumerate}
\end{theorem}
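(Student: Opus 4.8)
The plan is to reduce both implications to a single \emph{forward relation} that expresses the $D$-valued pairings $P_Z := \langle f_\alpha, f_\beta^{[Z]}\rangle_G$ in terms of the real pairings $Q_{XY} := \langle f_\alpha^{(X)}, f_\beta^{(Y)}\rangle_G$, and then to read off the two directions from the two parts of Lemma~\ref{lemma:tau-conj-identity}. First I would expand both slots in the real basis $\mathcal{I}_D$. Since $f_\alpha(g) = \sum_X f_\alpha^{(X)}(g)\,X$ and $f_\beta^{[Z]}(g) = \tau_Z(f_\beta(g)) = \sum_Y f_\beta^{(Y)}(g)\,\tau_Z(Y)$, and since the real scalars $f_\alpha^{(X)}(g), f_\beta^{(Y)}(g)$ are central in $D$, bilinearity of $\langle\cdot,\cdot\rangle_G$ yields
\[
P_Z \;=\; \sum_{X,Y\in\mathcal{I}_D} Q_{XY}\, X\,\overline{\tau_Z(Y)},
\qquad Q_{XY}\in\mathbb{R}.
\]
The point requiring care already here is that all noncommutativity of $D=\mathbb{H}$ must be confined to the fixed products $X\,\overline{\tau_Z(Y)}$; this is legitimate precisely because the coefficients $Q_{XY}$ are real, hence central.

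For the implication $(1)\Rightarrow(2)$ I would substitute the orthonormality hypothesis $Q_{XY} = \tfrac{1}{d}\delta_{\alpha\beta}\delta_{XY}$ into the forward relation, collapsing the double sum to
\[
P_Z \;=\; \frac{\delta_{\alpha\beta}}{d}\sum_{X\in\mathcal{I}_D} X\,\overline{\tau_Z(X)}.
\]
The inner sum is exactly the \emph{orthogonality sum identity} of Lemma~\ref{lemma:tau-conj-identity}, which evaluates to $d\,\delta_{1Z}$; this gives $P_Z = \delta_{\alpha\beta}\delta_{1Z}$, i.e. condition $(2)$.

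For the converse $(2)\Rightarrow(1)$ I would invert the forward relation by summing against a suitable probe. Right-multiplying $P_Z$ by $\tau_Z(Y_0)$ and summing over $Z$, I use that each $\tau_Z$ is an $\mathbb{R}$-algebra automorphism commuting with conjugation, so that $\overline{\tau_Z(Y)}\,\tau_Z(Y_0) = \tau_Z(\overline{Y}\,Y_0)$. The resulting sum $\sum_{Z}\tau_Z(\overline{Y}Y_0)$ is handled by the $X=1$ instance of the \emph{inversion formula} of Lemma~\ref{lemma:tau-conj-identity}, which reads $\sum_{Z\in\mathcal{I}_D}\tau_Z(w) = d\,\operatorname{Re}(w)$ for all $w\in D$, together with the orthonormality $\operatorname{Re}(\overline{Y}Y_0)=\delta_{YY_0}$ of $\mathcal{I}_D$ under the trace form. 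These combine to
\[
\sum_{Z\in\mathcal{I}_D} P_Z\,\tau_Z(Y_0) \;=\; d\sum_{X\in\mathcal{I}_D} Q_{X Y_0}\, X.
\]
Substituting the hypothesis $P_Z = \delta_{\alpha\beta}\delta_{1Z}$ makes the left-hand side equal $\delta_{\alpha\beta}\,\tau_1(Y_0) = \delta_{\alpha\beta}\,Y_0$, and comparing coefficients in the real basis $\mathcal{I}_D$ yields $Q_{XY_0} = \tfrac{1}{d}\delta_{\alpha\beta}\delta_{X Y_0}$, which is condition $(1)$.

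The main obstacle I anticipate is the bookkeeping around the noncommutativity of $\mathbb{H}$: I must track left versus right placement of the basis products throughout, and justify the two algebraic moves $\overline{\tau_Z(Y)} = \tau_Z(\overline{Y})$ and $\tau_Z(\overline{Y})\,\tau_Z(Y_0)=\tau_Z(\overline{Y}Y_0)$ carefully, since these are what let the conjugations pass through the automorphisms. Conceptually the only subtlety is that the forward and inverse directions sum over \emph{complementary} indices—over the basis element being conjugated in one case, and over the conjugation index in the other—and it is exactly the two parts of Lemma~\ref{lemma:tau-conj-identity} that supply these two complementary reconstruction identities.
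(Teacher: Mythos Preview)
Your argument is correct, and the forward direction $(1)\Rightarrow(2)$ is verbatim the paper's: expand both slots in the basis $\mathcal{I}_D$ to get the forward relation $P_Z=\sum_{X,Y}Q_{XY}\,X\,\overline{\tau_Z(Y)}$, substitute the hypothesis, and invoke the orthogonality sum of Lemma~\ref{lemma:tau-conj-identity}(1).

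For the converse $(2)\Rightarrow(1)$ your route differs slightly from the paper's, though the two are close cousins. The paper applies the \emph{full} inversion formula of Lemma~\ref{lemma:tau-conj-identity}(2) to write $f_\beta^{(Y)}$ as a signed $D$-linear combination of the conjugates $f_\beta^{[Z]}$, then pairs with $f_\alpha$ and reads off the coefficients. You instead invert the forward relation directly: right-multiply $P_Z$ by $\tau_Z(Y_0)$, sum over $Z$, and use only the $X=1$ instance of the inversion formula, namely the trace identity $\sum_{Z}\tau_Z(w)=d\,\operatorname{Re}(w)$, together with $\operatorname{Re}(\overline{Y}Y_0)=\delta_{YY_0}$. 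This is marginally more economical---you never need the sign factors $\tau_Y(Z)/Z$ that appear in the general inversion formula---at the cost of the two auxiliary checks you flag (that $\tau_Z$ commutes with conjugation and is multiplicative), both of which are routine. Either way the underlying mechanism is the same: Lemma~\ref{lemma:tau-conj-identity} supplies the two complementary summation identities needed to pass between the $P_Z$'s and the $Q_{XY}$'s.
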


\begin{proof}
We begin by expanding \( f_\beta^{[Z]}(g) \):
\[
f_\beta^{[Z]}(g) = \tau_Z\left( \sum_{Y \in \mathcal{I}_D} f_\beta^{(Y)}(g) Y \right)
= \sum_{Y \in \mathcal{I}_D} f_\beta^{(Y)}(g) \cdot \tau_Z(Y),
\]
so
\[
\overline{f_\beta^{[Z]}(g)} = \sum_{Y \in \mathcal{I}_D} f_\beta^{(Y)}(g) \cdot \overline{ \tau_Z(Y) }.
\]

The inner product becomes
\begin{align*}
\left\langle f_\alpha, f_\beta^{[Z]} \right\rangle_G 
&= \frac{1}{|G|} \sum_{g \in G} 
\left( \sum_{X \in \mathcal{I}_D} f_\alpha^{(X)}(g) X \right)
\left( \sum_{Y \in \mathcal{I}_D} f_\beta^{(Y)}(g) \cdot \overline{ \tau_Z(Y) } \right) \\
&= \sum_{X,Y \in \mathcal{I}_D} \left\langle f_\alpha^{(X)}, f_\beta^{(Y)} \right\rangle_G \cdot X \cdot \overline{ \tau_Z(Y) }.
\end{align*}

\textbf{(1) \(\Rightarrow\) (2).} By the assumption,
\[
\left\langle f_\alpha^{(X)}, f_\beta^{(Y)} \right\rangle_G = \frac{1}{d} \delta_{\alpha\beta} \delta_{XY},
\]
and hence
\[
\left\langle f_\alpha, f_\beta^{[Z]} \right\rangle_G 
= \delta_{\alpha \beta} \cdot \frac{1}{d} \sum_{X \in \mathcal{I}_D} X \cdot \overline{ \tau_Z(X) } 
= \delta_{\alpha \beta} \cdot \delta_{1Z},
\]
by Lemma~\ref{lemma:tau-conj-identity}(1).

\medskip

\textbf{(2) \(\Rightarrow\) (1).}  
Using the inversion formula in Lemma~\ref{lemma:tau-conj-identity}(2), we write:
\[
f_\beta^{(Y)} = \frac{1}{d} \sum_{Z \in \mathcal{I}_D} \left( \frac{\tau_Y(Z)}{Z} \right) \cdot \overline{Y} \cdot f_\beta^{[Z]}.
\]

Taking the inner product gives:
\[
\left\langle f_\alpha, f_\beta^{(Y)} \right\rangle_G = \frac{1}{d} \sum_{Z \in \mathcal{I}_D} \left( \frac{\tau_Y(Z)}{Z} \right) Y \cdot \left\langle f_\alpha, f_\beta^{[Z]} \right\rangle_G.
\]

By assumption (2), the only nonzero term is when \( Z = 1 \), giving:
\[
\left\langle f_\alpha, f_\beta^{(Y)} \right\rangle_G = \frac{1}{d} \delta_{\alpha\beta} \cdot Y.
\]

On the other hand, we also have:
\[
\left\langle f_\alpha, f_\beta^{(Y)} \right\rangle_G 
= \sum_{X \in \mathcal{I}_D} \left\langle f_\alpha^{(X)}, f_\beta^{(Y)} \right\rangle_G \cdot X.
\]

Comparing both expressions, we conclude:
\[
\left\langle f_\alpha^{(X)}, f_\beta^{(Y)} \right\rangle_G = \frac{1}{d} \delta_{\alpha\beta} \delta_{XY},
\]
as claimed.
\end{proof}

\section{Group Orbits and Moment Conditions}

We develop moment-based criteria for when a group orbit \( Gv \subset S(V) \) forms a spherical design. The 1-design and 2-design conditions correspond to the vanishing of first and second moments, respectively, and can be verified using Schur orthogonality.

Our contributions are twofold: we provide an explicit characterization of 2-designs in isotypic components across all Frobenius–Schur types, and we establish a natural bijection between global 2-designs and tuples of componentwise 2-designs via dimension-weighted embeddings. While based on classical principles, this explicit formulation appears to be absent from the existing literature.

\subsection{Spherical Designs from Group Orbits}

Let \( V \) be a finite-dimensional real inner product space equipped with an orthogonal action of a finite group \( G \). For any unit vector \( v \in V \), consider its orbit
\[
Gv := \{ gv \mid g \in G \} \subset S(V),
\]
where \( S(V) \) denotes the unit sphere in \( V \).

A finite subset \( X \subset S(V) \) is called a \emph{spherical \( t \)-design} if it averages all real-valued polynomial functions of degree at most \( t \) exactly:
\[
\frac{1}{|X|} \sum_{x \in X} f(x) = \int_{S(V)} f(x)\, d\mu(x) \quad \text{for all } f \in P_{\le t}(V),
\]
where \( P_{\le t}(V) \) denotes the space of real polynomials of degree at most \( t \), and \( d\mu \) is the normalized rotation-invariant measure on the sphere.

\medskip

When \( X = Gv \) is a group orbit, the design condition can be tested by comparing orbit averages to known spherical integrals. For a standard orthonormal basis \( \{ e_1, \dots, e_n \} \) of \( V \cong \mathbb{R}^n \), the relevant integrals on \( S^{n-1} \) are:

\[
\int_{S^{n-1}} x_i\, d\mu(x) = 0, \qquad
\int_{S^{n-1}} x_i^2\, d\mu(x) = \frac{1}{n}, \qquad
\int_{S^{n-1}} x_i x_j\, d\mu(x) = 0 \quad (i \ne j),
\]
which collectively yield:
\[
\int_{S^{n-1}} xx^\top\, d\mu(x) = \frac{1}{n} I_n.
\]

\medskip

These integrals motivate the following characterization for group orbit designs.

\begin{definition}[Moment Conditions for Spherical Designs]
Let \( v \in V \) be a unit vector. The orbit \( Gv \subset S(V) \) is:
\begin{itemize}
    \item a \textbf{1-design} if and only if
    \[
    \sum_{g \in G} gv = 0;
    \]
    
    \item a \textbf{2-design} if and only if it is a 1-design and
    \[
    \frac{1}{|G|} \sum_{g \in G} (gv)(gv)^\top = \frac{1}{\dim V} I_V.
    \]
\end{itemize}
\end{definition}

The matrix condition in the second part provides a concrete criterion for identifying 2-designs via second-moment matrices.

The 1-design condition has a standard interpretation in representation theory:

\begin{proposition}
    \label{them:1-design}
Let \( V \) be a real representation of a finite group \( G \). Then the orbit \( Gv \) forms a spherical 1-design for all unit vectors \( v \in V \) if and only if \( V \) contains no copy of the trivial representation.
\end{proposition}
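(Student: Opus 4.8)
The plan is to prove the biconditional in Proposition~\ref{them:1-design} by reducing the 1-design condition \(\sum_{g\in G} gv = 0\) to a statement about the projection of \(v\) onto the trivial isotypic component of \(V\). The central object is the averaging operator
\[
P := \frac{1}{|G|}\sum_{g\in G} \rho(g) \in \mathrm{End}_{\mathbb{R}}(V),
\]
which is the standard \(G\)-invariant projection onto the subspace \(V^G\) of \(G\)-fixed vectors. I would first record the two facts I need about \(P\): it is idempotent, and its image is exactly \(V^G\), the sum of all trivial subrepresentations of \(V\). These are immediate from the orthogonality of the trivial character together with Maschke's theorem (or directly: \(Pv\) is fixed by every \(g\), and \(P\) restricts to the identity on \(V^G\)).

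Next I would rewrite the 1-design condition in terms of \(P\). For a fixed unit vector \(v\), we have \(\sum_{g\in G} gv = |G|\cdot Pv\), so the condition \(\sum_{g\in G} gv = 0\) holds if and only if \(Pv = 0\), i.e. if and only if \(v\) is orthogonal to \(V^G\) (equivalently, \(v\) lies in \(\mathbb{R}[G]_0\)-type complement within \(V\)). This converts the geometric design condition into the clean linear-algebraic statement \(v \in (V^G)^\perp = \ker P\).

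With this reformulation the proposition becomes a quantifier statement. For the direction assuming \(V\) contains no trivial subrepresentation: then \(V^G = 0\), so \(P = 0\), hence \(Pv = 0\) for \emph{every} \(v\), and every orbit is a 1-design. For the converse I would argue by contraposition: if \(V\) does contain a copy of the trivial representation, then \(V^G \neq 0\), so there is a unit vector \(w \in V^G\); for this \(w\) we have \(Pw = w \neq 0\), and so \(\sum_{g\in G} gw = |G|\,w \neq 0\), exhibiting a unit vector whose orbit fails the 1-design condition. This shows that if \emph{all} orbits are 1-designs then \(V^G = 0\).

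I do not expect a serious obstacle here, as the argument is essentially the standard identification of the group-averaging operator with the projection onto invariants; the only point requiring mild care is the logical structure of the ``for all unit vectors'' quantifier, which is why the converse is cleanest via contraposition (producing a single bad vector in \(V^G\)) rather than a direct argument. One should also note explicitly that \(V^G \neq 0\) guarantees the existence of a \emph{unit} vector in it, so that the counterexample genuinely lives on the sphere \(S(V)\); this is automatic since \(V^G\) is a nonzero real subspace.
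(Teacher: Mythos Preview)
Your proof is correct and is the standard argument via the averaging projector \(P = \frac{1}{|G|}\sum_{g} \rho(g)\) onto \(V^G\). The paper does not actually supply a proof of this proposition---it is stated as a well-known fact (``a standard interpretation in representation theory'') and left without a proof environment---so there is nothing to compare against; your write-up fills in precisely the details one would expect.
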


\subsection{Constructing 2-Designs via Orthogonal Decomposition}

We now begin our study of spherical 2-designs arising from group orbits. Unlike 1-designs, which are characterized by the absence of invariant vectors, 2-designs require a more rigid second-moment condition reflecting global isotropy. To analyze this condition systematically, we first consider the case where the representation space decomposes orthogonally into \( G \)-invariant subspaces with no shared irreducible components.

In this setting, we establish a simple and natural bijection between global 2-designs and tuples of 2-designs in the individual components, combined via dimension-weighted scaling. While the construction follows readily from moment computations and Schur orthogonality, we are not aware of this correspondence being explicitly stated in the literature.

\begin{proposition}[Combination of Disjoint Components]
Let \( V = V_1 \oplus V_2 \) be a decomposition into orthogonal \( G \)-invariant subspaces with no irreducible components in common. Suppose \( x_1 \in V_1 \), \( x_2 \in V_2 \) are unit vectors such that the orbits \( Gx_1 \subset V_1 \) and \( Gx_2 \subset V_2 \) are spherical 2-designs.

Define the dimension-scaled sum
\[
x = a x_1 \oplus b x_2, \quad
a = \sqrt{\frac{\dim V_1}{\dim V}}, \quad
b = \sqrt{\frac{\dim V_2}{\dim V}},
\]
with \( \dim V = \dim V_1 + \dim V_2 \). Then \( \|x\| = 1 \), and the orbit \( Gx \subset V \) is a spherical 2-design.
\end{proposition}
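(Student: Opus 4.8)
The plan is to verify the three defining requirements directly — normalization, the first-moment (1-design) condition, and the second-moment (2-design) condition — exploiting throughout that both the group action and the relevant outer products respect the block decomposition \( V = V_1 \oplus V_2 \).

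First I would record the normalization. Since \( x_1, x_2 \) are unit vectors and the two summands are orthogonal,
\[
\|x\|^2 = a^2\|x_1\|^2 + b^2\|x_2\|^2 = \frac{\dim V_1}{\dim V} + \frac{\dim V_2}{\dim V} = 1.
\]
Because the action preserves each \( V_i \), every \( g \in G \) acts blockwise, so \( gx = a(gx_1) \oplus b(gx_2) \). The 1-design condition is then immediate: \( \sum_{g} gx = a\big(\sum_g gx_1\big) \oplus b\big(\sum_g gx_2\big) = 0 \), since each \( Gx_i \), being a 2-design, is in particular a 1-design.

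The substance of the argument is the second-moment identity. Writing \( gx \) in block form and forming the outer product gives a \( 2\times 2 \) block matrix whose diagonal blocks are \( a^2(gx_1)(gx_1)^\top \) and \( b^2(gx_2)(gx_2)^\top \), and whose off-diagonal blocks are \( ab\,(gx_1)(gx_2)^\top \) and its transpose. Averaging the diagonal blocks over \( G \) and invoking the local 2-design hypotheses yields \( a^2 \cdot \tfrac{1}{\dim V_1} I_{V_1} = \tfrac{1}{\dim V} I_{V_1} \) and likewise \( \tfrac{1}{\dim V} I_{V_2} \); the dimension weights are chosen precisely so that these two scalars coincide and assemble into \( \tfrac{1}{\dim V} I_V \) along the diagonal.

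It remains to annihilate the off-diagonal blocks, and this is the step where the disjointness hypothesis is essential and where I expect the only real difficulty to lie. I would set \( M := \frac{1}{|G|}\sum_{g} (gx_1)(gx_2)^\top \), viewed as a real-linear map \( V_2 \to V_1 \), and check that it intertwines the two representations: using orthogonality of the action (so that \( \rho_2(h)^{-1} = \rho_2(h)^\top \)) together with the reindexing \( g \mapsto hg \) shows \( \rho_1(h) M = M\,\rho_2(h) \) for every \( h \in G \), i.e.\ \( M \in \mathrm{Hom}_G(V_2, V_1) \). Since \( V_1 \) and \( V_2 \) share no irreducible constituent, Schur's lemma forces \( \mathrm{Hom}_G(V_2, V_1) = 0 \), hence \( M = 0 \) and likewise \( M^\top = 0 \). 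With the off-diagonal blocks gone, the averaged outer product equals exactly \( \tfrac{1}{\dim V} I_V \), which is the 2-design condition. The remaining computations are routine bookkeeping once the blockwise structure and the equivariance-plus-Schur argument for \( M \) are in place.
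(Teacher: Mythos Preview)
Your proof is correct and follows essentially the same route as the paper: compute the second-moment matrix in block form, use the local 2-design hypotheses on the diagonal blocks, and use the disjointness of irreducible constituents to kill the off-diagonal block. The only cosmetic difference is that the paper phrases the vanishing of the cross term via Schur orthogonality of matrix coefficients, whereas you show directly that \( M = \frac{1}{|G|}\sum_g (gx_1)(gx_2)^\top \) is a \( G \)-intertwiner and invoke Schur's lemma; these are the same argument in different packaging.
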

\begin{proof}
We view vectors in \( V = V_1 \oplus V_2 \) as column vectors with block components in \( V_1 \) and \( V_2 \). For the element
\[
x = a x_1 \oplus b x_2,
\]
its orbit elements have the form
\[
g x = a g x_1 \oplus b g x_2,
\]
so the outer product \( (g x)(g x)^\top \) becomes the block matrix
\[
(g x)(g x)^\top = 
\begin{bmatrix}
a^2 (g x_1)(g x_1)^\top & ab (g x_1)(g x_2)^\top \\
ab (g x_2)(g x_1)^\top & b^2 (g x_2)(g x_2)^\top
\end{bmatrix}.
\]

Averaging over \( G \), we obtain the second-moment matrix:
\[
\frac{1}{|G|} \sum_{g \in G} (g x)(g x)^\top =
\begin{bmatrix}
a^2 M_{11} & ab M_{12} \\
ab M_{21} & b^2 M_{22}
\end{bmatrix},
\]
where
\[
M_{11} = \frac{1}{|G|} \sum_{g \in G} (g x_1)(g x_1)^\top, \quad
M_{22} = \frac{1}{|G|} \sum_{g \in G} (g x_2)(g x_2)^\top,
\]
\[
M_{12} = \frac{1}{|G|} \sum_{g \in G} (g x_1)(g x_2)^\top, \quad
M_{21} = M_{12}^\top.
\]

Since \( x_1 \) and \( x_2 \) lie in orthogonal subspaces with no common irreducible summands, Schur orthogonality implies that the matrix coefficients of \( x_1 \) and \( x_2 \) are orthogonal. Therefore, the mixed terms vanish:
\[
M_{12} = M_{21}^\top = 0.
\]

Moreover, because \( Gx_1 \) and \( Gx_2 \) are spherical 2-designs in their respective spaces, we have
\[
M_{11} = \frac{1}{d_1} I_{V_1}, \quad
M_{22} = \frac{1}{d_2} I_{V_2}, \quad \text{where } d_i = \dim V_i.
\]
Substituting in the block matrix, we obtain:
\[
\frac{1}{|G|} \sum_{g \in G} (g x)(g x)^\top =
\begin{bmatrix}
\frac{a^2}{d_1} I_{V_1} & 0 \\
0 & \frac{b^2}{d_2} I_{V_2}
\end{bmatrix}.
\]

Using the definitions \( a^2 = \frac{d_1}{d_1 + d_2} \) and \( b^2 = \frac{d_2}{d_1 + d_2} \), we simplify:
\[
\frac{a^2}{d_1} = \frac{1}{d_1 + d_2}, \quad
\frac{b^2}{d_2} = \frac{1}{d_1 + d_2}.
\]
Thus,
\[
\frac{1}{|G|} \sum_{g \in G} (g x)(g x)^\top = \frac{1}{\dim V}
\begin{bmatrix}
I_{V_1} & 0 \\
0 & I_{V_2}
\end{bmatrix}
= \frac{1}{\dim V} I_V,
\]
which confirms that \( Gx \subset S(V) \) is a spherical 2-design.
\end{proof}

\begin{remark}
The dimension-weighted scaling ensures that contributions from each component are balanced so that the resulting second-moment matrix is scalar on the whole space.
\end{remark}

We now consider the inverse direction: given a spherical 2-design in \( V \), how does it project to components? We show that orthogonal projections onto \( G \)-invariant subspaces preserve the 2-design property, and that the original design can be reconstructed from its components via proper rescaling.

\begin{proposition}[Projection Preserves 2-Designs]
Let \( V \) be a real \( G \)-representation of dimension \( n \), and \( W \subset V \) a \( G \)-invariant subspace of dimension \( m \). If \( Gv \subset S(V) \) is a spherical 2-design and \( w := \pi_W(v) \ne 0 \), then the normalized projection orbit
\[
\left\{ \frac{g w}{\|w\|} \,\bigg{|}\, g \in G \right\} \subset S(W)
\]
is a spherical 2-design in \( W \). Moreover, \( \|w\|^2 = \frac{m}{n} \).
\end{proposition}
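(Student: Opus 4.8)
The plan is to exploit the $G$-equivariance of the orthogonal projection $\pi_W$ and then push the global second-moment identity through this projection. The essential observation is that, since $W$ is $G$-invariant and the $G$-action on $V$ is orthogonal, the complement $W^\perp$ is also $G$-invariant; hence $\pi_W$ commutes with the action, giving $\pi_W(gv) = g\,\pi_W(v) = gw$ for every $g \in G$. This has two immediate consequences: the projected orbit is exactly the orbit $Gw$, and the 1-design condition is inherited, since $\sum_{g} gw = \pi_W\big(\sum_g gv\big) = \pi_W(0) = 0$.

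First I would conjugate the global second-moment identity by $\pi_W$. Using that $\pi_W$ is symmetric ($\pi_W^\top = \pi_W$) and that $\pi_W(gv) = gw$, the product $\pi_W (gv)(gv)^\top \pi_W$ equals $(gw)(gw)^\top$. Applying $\pi_W$ on both sides of the hypothesis $\frac{1}{|G|}\sum_g (gv)(gv)^\top = \frac{1}{n} I_V$ then yields $\frac{1}{|G|}\sum_g (gw)(gw)^\top = \frac{1}{n}\pi_W$. Restricting to $W$, where $\pi_W$ acts as the identity, this reads $\frac{1}{|G|}\sum_g (gw)(gw)^\top = \frac{1}{n} I_W$ as operators on $W$.

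Next I would extract $\|w\|^2$ by taking the trace of this restricted identity. On the left, $\operatorname{tr}\big((gw)(gw)^\top\big) = \|gw\|^2 = \|w\|^2$ because the action is orthogonal, so the averaged left-hand side has trace $\|w\|^2$; on the right, $\operatorname{tr}(\tfrac{1}{n} I_W) = m/n$. Equating gives $\|w\|^2 = m/n$, proving the ``moreover'' claim. Finally, writing $\hat w = w/\|w\|$ and dividing the restricted second-moment identity by $\|w\|^2 = m/n$, I obtain $\frac{1}{|G|}\sum_g (g\hat w)(g\hat w)^\top = \frac{n}{m}\cdot\frac{1}{n} I_W = \frac{1}{m} I_W$, which together with the inherited 1-design condition is exactly the 2-design criterion for $G\hat w \subset S(W)$.

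The computation is entirely routine once the right viewpoint is fixed; the only point requiring genuine care is the $G$-equivariance of $\pi_W$ and the accompanying bookkeeping between $\pi_W$ as an operator on $V$ and as the identity on $W$. I would make this explicit to justify the restriction step, since that is precisely where the scalar $\frac{1}{n}$ (inherited from the ambient dimension) and the target scalar $\frac{1}{m}$ (intrinsic to $W$) are reconciled through the factor $\|w\|^2 = m/n$.
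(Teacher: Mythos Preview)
Your proof is correct and arguably cleaner than the paper's, but it proceeds along a genuinely different line. The paper works with the harmonic-polynomial characterization of 2-designs: it pulls back a harmonic polynomial $h$ on $W$ to $f := h \circ \pi_W$ on $V$, observes that $f$ is still harmonic, and invokes the design property of $Gv$ to conclude $\sum_g h(gw) = 0$; the norm $\|w\|^2 = m/n$ is then obtained by comparing the orbit average of a coordinate square $x_1^2$ against the known sphere integrals on $S(V)$ and $S(W)$. You instead stay entirely within the second-moment matrix formulation (which is the paper's own \emph{definition} of a 2-design): conjugating $\tfrac{1}{|G|}\sum_g (gv)(gv)^\top = \tfrac{1}{n}I_V$ by $\pi_W$ and restricting to $W$ gives the projected second moment directly, and the trace recovers $\|w\|^2$. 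Your route avoids invoking the equivalence with the harmonic characterization and the (easy but unspoken) fact that $h\circ\pi_W$ remains harmonic; it is also more consistent with the block-matrix arguments used in the surrounding propositions of the paper. The harmonic-polynomial approach, on the other hand, generalizes more transparently to higher $t$, where a purely second-moment formulation is not available.
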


\begin{proof}
Let \( h : W \to \mathbb{R} \) be a homogeneous harmonic polynomial of degree \( j \le 2 \), and define \( f(x) := h(\pi_W(x)) \), a harmonic polynomial on \( V \). Since \( \pi_W \) is linear and \( h \) is harmonic, \( f \) is harmonic on \( V \).

By the 2-design property of \( Gv \subset S(V) \),
\[
\sum_{g \in G} f(gv) = \sum_{g \in G} h(\pi_W(gv)) = \sum_{g \in G} h(gw) = 0,
\]
where we used \( \pi_W(gv) = g \pi_W(v) = gw \) due to orthogonality and \( G \)-invariance of \( W \).

Homogeneity of \( h \) yields:
\[
\sum_{g \in G} h\left( \frac{gw}{\|w\|} \right) = \frac{1}{\|w\|^j} \sum_{g \in G} h(gw) = 0.
\]
Hence, the normalized orbit \( G(w/\|w\|) \) is a spherical 2-design in \( W \).

To compute \( \|w\|^2 \), consider a coordinate function \( x_1 \) on \( W \subset V \), which extends naturally to \( V \). Let \( f(x) := x_1^2 \). Since \( \pi_W(v) = w \), we have \( f(\pi_W(v)) = f(v) \), and hence by the 2-design property:
\[
\sum_{g \in G} f(gw) = \sum_{g \in G} f(gv)  = \int_{S(V)} x_1^2 \, d\mu = \frac{1}{n}.
\]
On the other hand, homogeneity gives
\[
\sum_{g \in G} f\left( \frac{g w}{\|w\|} \right) = \frac{1}{\|w\|^2} \sum_{g \in G} f(gw) = \int_{S(W)} x_1^2 \, d\mu = \frac{1}{m}.
\]
Solving yields \( \|w\|^2 = \frac{m}{n} \). \qedhere
\end{proof}

The preceding results establish both directions of correspondence: a global 2-design in $V$ can be assembled from 2-designs in orthogonal $G$-invariant components, and conversely, each global design decomposes into such componentwise designs via projection and normalization. We now formalize this relationship as a bijection. While the construction holds for any orthogonal decomposition with no overlapping irreducible components, it is most naturally applied when $V$ is decomposed into its isotypic components, as considered in the next subsection.

\begin{theorem}[Bijection via Orthogonal Decomposition]\label{thm:orthogonaldecomposition}
Let $V = \bigoplus_{i=1}^r V_i$ be an orthogonal decomposition of a real $G$-representation into $G$-invariant subspaces with no shared irreducible subrepresentations. Then there is a bijection between:

\begin{itemize}
\item spherical 2-designs $Gx \subset S(V)$, and
\item tuples $(x_1, \dots, x_r)$, where each $x_i \in V_i$ is a unit vector such that $Gx_i \subset S(V_i)$ is a spherical 2-design,
\end{itemize}

via the dimension-weighted embedding

$$
x = \sum_{i=1}^r \sqrt{\frac{\dim V_i}{\dim V}}\, x_i.
$$

Conversely, each such $x \in S(V)$ determines the corresponding tuple $(x_1, \dots, x_r)$ by projection and normalization:

$$
x_i = \frac{\pi_{V_i}(x)}{\|\pi_{V_i}(x)\|}, \quad \text{with} \quad \|\pi_{V_i}(x)\|^2 = \frac{\dim V_i}{\dim V}.
$$

\end{theorem}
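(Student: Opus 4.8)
The plan is to assemble the bijection directly from the two preceding propositions, which already supply the two directions of the correspondence; the only real work is to verify that the two constructions are mutually inverse and that the iterated use of the two-component combination proposition extends to $r$ summands. First I would define the forward map $\Phi$ sending a tuple $(x_1,\dots,x_r)$ with each $Gx_i\subset S(V_i)$ a 2-design to the dimension-weighted sum $x=\sum_i a_i x_i$ with $a_i=\sqrt{\dim V_i/\dim V}$, and the backward map $\Psi$ sending a global 2-design $Gx$ to the tuple of normalized projections $x_i=\pi_{V_i}(x)/\|\pi_{V_i}(x)\|$. The Projection Proposition guarantees $\Psi$ is well-defined: each $\pi_{V_i}(x)$ is nonzero (since $\|\pi_{V_i}(x)\|^2=\dim V_i/\dim V>0$), each normalized projection orbit is a 2-design in $V_i$, and the norm identity pins down $\|\pi_{V_i}(x)\|=a_i$.

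Next I would show $\Phi$ is well-defined, i.e.\ that $Gx$ is a genuine 2-design in $S(V)$. The Combination Proposition handles $r=2$; for general $r$ I would induct, grouping $V=V_1\oplus(V_2\oplus\cdots\oplus V_r)$ and checking that the weights compose correctly. The key point is that the induced weight on the tail block is $\sqrt{(\dim V_2+\cdots+\dim V_r)/\dim V}$, and within that block the relative weights $\sqrt{\dim V_i/(\dim V_2+\cdots+\dim V_r)}$ are exactly those needed for the inductive hypothesis; the product of the two nested square-root factors telescopes back to $a_i$. The hypothesis that distinct $V_i$ share no irreducible components is what makes each pairwise cross term $M_{ij}=\frac{1}{|G|}\sum_g (gx_i)(gx_j)^\top$ vanish by Schur orthogonality, exactly as in the $r=2$ proof, so the off-diagonal blocks of the full second-moment matrix are zero and the diagonal blocks collapse to $\tfrac{1}{\dim V}I_{V_i}$.

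Finally I would verify that $\Phi$ and $\Psi$ are inverse to each other. For $\Psi\circ\Phi=\mathrm{id}$: starting from a tuple, the orthogonality of the $V_i$ gives $\pi_{V_i}(\sum_j a_j x_j)=a_i x_i$, so $\|\pi_{V_i}(x)\|=a_i$ (since $\|x_i\|=1$) and normalization returns $x_i$. For $\Phi\circ\Psi=\mathrm{id}$: starting from a global design $x$, the Projection Proposition gives $\|\pi_{V_i}(x)\|=a_i$, hence $\sum_i a_i x_i=\sum_i a_i\,\pi_{V_i}(x)/a_i=\sum_i \pi_{V_i}(x)=x$, the last equality being the orthogonal decomposition of $x$ across the $V_i$. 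This closes the loop.

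I expect the main obstacle to be bookkeeping rather than conceptual: making the induction on $r$ clean, in particular confirming that the nested dimension weights multiply correctly and that the ``no shared irreducibles'' hypothesis genuinely propagates to the grouped tail block $V_2\oplus\cdots\oplus V_r$ (so that the pairwise-disjointness needed to kill cross terms is preserved at each stage). One should also be slightly careful that $\Psi$ is only defined on designs for which every projection is nonzero, but the norm computation $\|\pi_{V_i}(x)\|^2=\dim V_i/\dim V$ shows this is automatic, so no design is excluded and the two maps are genuinely inverse bijections.
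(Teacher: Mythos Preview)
Your proposal is correct and matches the paper's approach exactly. The paper does not give a separate proof of this theorem at all; it simply states, in the paragraph preceding the theorem, that ``the preceding results establish both directions of correspondence'' and then records the bijection. Your plan fills in precisely those details---the induction on $r$ to extend the two-component Combination Proposition, the use of the Projection Proposition for the reverse map and the norm identity, and the routine verification that $\Phi$ and $\Psi$ are mutually inverse---all of which the paper leaves implicit.
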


\subsection{Classification of 2-Designs in a Single Isotypic Component}

We now turn to the case where the group orbit lies entirely within a single isotypic component of a real representation. This setting offers a particularly structured framework in which the second-moment condition can be analyzed precisely using the Frobenius–Schur type of the underlying irreducible representation.

Let \( (\rho, V) \) be a non-trivial irreducible real representation of \( G \), and let \( D = \mathrm{End}_G(V) \in \{ \mathbb{R}, \mathbb{C}, \mathbb{H} \} \) denote its Frobenius–Schur type. Then the corresponding isotypic component of \( \mathbb{R}[G] \) is isomorphic, as a real \( G \)-representation, to the matrix space \( M_n(D) := \mathrm{Mat}_{n \times n}(D) \), where \( G \) acts on the left via a \( D \)-linear representation \( \rho : G \to \mathrm{GL}_n(D) \).

Within \( M_n(D) \), we consider the subspaces \( \mathrm{Mat}_{n \times m}(D) \) consisting of the first \( m \) columns. These are real \( G \)-subrepresentations, isomorphic to \( V^m \), and serve as a natural setting for constructing and analyzing matrix-valued orbits. Let \( \pi_{n,m}^D: M_n(D) \to \mathrm{Mat}_{n \times m}(D) \) denote the orthogonal projection onto these subspaces.

We now state a complete characterization of when such orbits form spherical 2-designs. This is governed by a simple algebraic condition on the matrix's inner product structure:

\begin{theorem}[2-Designs in Isotypic Components]\label{thm:isotypic-2design}
Let \( M \in \mathrm{Mat}_{n \times m}(D) \) have Frobenius norm \( \|M\|_F = 1 \), and let \( G \) act on \( M \) by left multiplication via \( \rho \). Then the orbit
\[
GM := \{ \rho(g) M \mid g \in G \}
\]
forms a spherical 2-design in \( \mathbb{R}^{nmd} \) if and only if
\[
\overline{M}^\top M = \frac{1}{m} I_m.
\]

In particular, for \( m = n \), this holds if and only if \( M = \frac{1}{\sqrt{n}} U \) for some unitary matrix \( U \in \mathrm{O}_D(n) \).

Moreover, every spherical 2-design orbit in a real subrepresentation of \( M_n(D) \) arises as the image
\[
\pi_{n,m}^D\left( \left\{ \frac{1}{\sqrt{m}} \rho(g) U \mid g \in G \right\} \right), \quad \text{for some } U \in \mathrm{O}_D(n).
\]
\end{theorem}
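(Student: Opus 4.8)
The plan is to reduce the 2-design condition to the second-moment matrix identity from the Definition, then compute that matrix explicitly using the $D$-valued Schur orthogonality developed in Section~2. The key observation is that the orbit $GM$ lives in $\mathrm{Mat}_{n\times m}(D)\cong \mathbb{R}^{nmd}$, and by the Moment Conditions definition, $GM$ is a spherical 2-design if and only if it is a 1-design and the averaged outer product $\frac{1}{|G|}\sum_{g}(\rho(g)M)(\rho(g)M)^\top$ equals $\frac{1}{nmd}I$ on the real vector space $\mathbb{R}^{nmd}$. The 1-design condition is automatic since $V$ (hence $V^m$) is a nontrivial irreducible, so Proposition~\ref{them:1-design} applies. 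The real work is the second-moment computation.

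**First I would** set up the computation by writing the columns of $M$ as vectors $M^{(1)},\dots,M^{(m)}\in D^n$ and expressing the real outer product $(\rho(g)M)(\rho(g)M)^\top$ entrywise in the real orthonormal basis $\mathcal{B}$ from Section~2.1. The matrix entries of $\rho(g)M$ are $D$-valued expressions built from the matrix coefficients $\rho_{ik}(g)$, and the averaged second moment is a sum over $g$ of products of two such coefficients. Here I invoke the Real Schur Orthogonality theorem together with the Equivalence theorem (Theorem~\ref{d-orthogonality}): the real second-moment matrix being scalar is exactly equivalent to a collection of $D$-valued inner-product identities among the coefficient functions. Carrying this out, the averaging collapses the $g$-dependence via orthogonality, leaving an expression in which the only surviving data is the Gram-type matrix $\overline{M}^\top M \in \mathrm{Mat}_m(D)$. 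The scalar requirement $\frac{1}{nmd}I$ then translates precisely into $\overline{M}^\top M = \frac{1}{m}I_m$ once the normalization $\|M\|_F = 1$ (i.e. $\mathrm{Tr}_D(\overline{M}^\top M)=1$ in the appropriate sense) is taken into account.

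**For the specialization** to $m=n$, the condition $\overline{M}^\top M = \frac{1}{n}I_n$ says that $\sqrt{n}\,M$ has $D$-orthonormal columns, i.e. $\sqrt{n}\,M = U \in \mathrm{O}_D(n)$, which is immediate. For the final ``moreover'' clause, any 2-design orbit in a real subrepresentation $W\subset M_n(D)$ is isomorphic to some $V^m\cong\mathrm{Mat}_{n\times m}(D)$; composing this isomorphism with the orbit and applying the main equivalence shows the representing matrix $M$ satisfies the Gram identity, hence extends (by completing its $D$-orthonormal columns, after scaling by $\sqrt{m}$) to a full matrix $\frac{1}{\sqrt{m}}U$ with $U\in\mathrm{O}_D(n)$ whose projection under $\pi_{n,m}^D$ recovers the orbit. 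I would verify that the projection and the group action commute, so that $\pi_{n,m}^D(\{\frac{1}{\sqrt m}\rho(g)U\})$ is indeed the given orbit.

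**The main obstacle** I expect is the careful bookkeeping in translating the real outer-product average into the $D$-valued Gram matrix, particularly handling the noncommutativity when $D=\mathbb{H}$ and tracking the conjugation $\overline{M}^\top$ correctly through the partial-conjugation formalism of Theorem~\ref{d-orthogonality}. The trace identity of Theorem~\ref{realtrace} is what guarantees that a $D$-scalar on $V^m$ corresponds to the correct real scalar $\frac{1}{nmd}$, so aligning the normalization constants across the three Frobenius–Schur types requires care. Once the Equivalence theorem is applied correctly, however, the three cases $D\in\{\mathbb{R},\mathbb{C},\mathbb{H}\}$ are handled uniformly, and the remaining steps are routine linear algebra over $D$.
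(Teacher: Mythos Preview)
Your plan is correct and follows essentially the same route as the paper's proof: the 1-design condition is disposed of via Proposition~\ref{them:1-design}, the second-moment identity is expanded entrywise using the real matrix coefficients $\rho_{ij}^{(X)}$, Schur orthogonality collapses the $g$-average, and Theorem~\ref{d-orthogonality} converts the resulting real conditions into the single $D$-valued Gram identity $\overline{M}^\top M=\tfrac1m I_m$; the $m=n$ specialization and the column-completion argument for the ``moreover'' clause are exactly as you describe. The one point on which the paper differs from your expectation is uniformity: rather than obtaining all three Frobenius--Schur types at once from Theorem~\ref{d-orthogonality}, the paper evaluates the key sum $\sum_{X\in\mathcal I_D}(XW)\cdot\overline{\tau_Y(XZ)}$ separately for $D=\mathbb R,\mathbb C,\mathbb H$, with the quaternionic case requiring the auxiliary identity $\sum_{X}X T\bar X=d\,\delta_{1T}$ and a short back-and-forth between the $Y$-indexed equations and their sum over $Y$---precisely the ``careful bookkeeping'' you flagged as the main obstacle.
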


\begin{proof}
By Proposition~\ref{them:1-design}, every group orbit is a 1-design. We now verify the 2-design condition.

Write \( M \) in real coordinates as
\[
M = \sum_{X \in \mathcal{I}_D} M^{(X)} \cdot X, \quad M^{(X)} \in \mathrm{Mat}_{n \times m}(\mathbb{R}).
\]
According to Theorem~\ref{d-orthogonality}, the orbit \( GM \subset \mathbb{R}^{nmd} \) forms a 2-design if and only if
\[
\left\langle (\rho(g)M)_{ki},\, (\rho(g)M)_{\ell j}^{[Y]} \right\rangle_G = \frac{1}{nm} \delta_{k\ell} \delta_{ij} \delta_{1Y}
\]
for all \( i,j,k,\ell \) and \( Y \in \mathcal{I}_D \), where \( \tau_Y \) denotes the partial conjugation associated to \( Y \).

Expanding both entries via the real components of \( \rho(g) \) and \( M \), we get:
\[
(\rho(g)M)_{ki} = \sum_{s,X,W} \rho(g)_{ks}^{(X)} M_{si}^{(W)} (XW),
\]
\[
\overline{\tau_Y((\rho(g)M)_{\ell j})} = \sum_{t,X',Z} \rho(g)_{\ell t}^{(X')} M_{tj}^{(Z)} \cdot \overline{\tau_Y(X'Z)}.
\]

Using the Schur orthogonality for real matrix coefficients:
\[
\frac{1}{|G|} \sum_{g \in G} \rho(g)_{ks}^{(X)} \rho(g)_{\ell t}^{(X')} = \frac{1}{nd} \delta_{k\ell} \delta_{st} \delta_{XX'},
\]
we obtain:
\begin{equation} \label{eq:general-inner-product}
\left\langle (\rho(g)M)_{ki}, (\rho(g)M)_{\ell j}^{[Y]} \right\rangle_G
= \frac{1}{nd} \delta_{k\ell} \sum_s \sum_{W,Z} M_{si}^{(W)} M_{sj}^{(Z)} \sum_{X \in \mathcal{I}_D} (XW) \cdot \overline{\tau_Y(XZ)}.
\end{equation}

We now evaluate this expression for each \( D \in \{ \mathbb{R}, \mathbb{C}, \mathbb{H} \} \).

\medskip

\noindent \textbf{Case \( D = \mathbb{R} \):}
Here \( \mathcal{I}_D = \{1\} \) and \( \tau_1 = \mathrm{id} \), so the inner sum becomes 1. Then
\[
\left\langle (\rho(g)M)_{ki}, (\rho(g)M)_{\ell j} \right\rangle_G = \frac{1}{n} \delta_{k\ell} \sum_s M_{si} M_{sj}.
\]
Thus, \( GM \) is a 2-design if and only if
\[
M^\top M = \frac{1}{m} I_n.
\]

\medskip

\noindent \textbf{Case \( D = \mathbb{C} \):}
Let \( \mathcal{I}_D = \{1, \mathbf{i}\} \). Since the elements commute and \( \tau_Y \) is complex conjugation when \( Y = \mathbf{i} \), we have
\[
\sum_{X \in \mathcal{I}_D} (XW) \cdot \overline{\tau_Y(XZ)} = \left( \sum_X X \overline{\tau_Y(X)} \right) W \overline{\tau_Y(Z)} = 2 \delta_{1Y} W \bar{Z}.
\]
Substituting into \eqref{eq:general-inner-product}:
\[
\left\langle (\rho(g)M)_{ki}, (\rho(g)M)_{\ell j}^{[Y]} \right\rangle_G
= \frac{1}{n} \delta_{k\ell} \delta_{1Y} \sum_s M_{si} \overline{M_{sj}}.
\]
Hence, the 2-design condition is equivalent to
\[
\overline{M}^\top M = \frac{1}{m} I_n.
\]

\medskip

\noindent \textbf{Case \( D = \mathbb{H} \):}  
Let \( \mathcal{I}_D = \{1, \mathbf{i}, \mathbf{j}, \mathbf{k}\} \) and recall that \( \tau_Y(x) = Y x Y^{-1} \) for all \( x \in \mathbb{H} \). Then:
\[
\sum_{X \in \mathcal{I}_D} (XW) \cdot \overline{ \tau_Y(XZ) }
= \sum_{X} (XW) \cdot \overline{ Y X Z Y^{-1} }
= \sum_{X} X (W Y \bar{Z}) \bar{X} \cdot \bar{Y}.
\]

We use the basic identity
\[
\sum_{X \in \mathcal{I}_D} X T \bar{X} = d \cdot \delta_{1T} \quad \text{for all } T \in \mathcal{I}_D,
\]
which holds by symmetry and is straightforward to verify.
.

Applying this to the expression above, the sum is nonzero if and only if:
\[
W Y \bar{Z} = \pm 1 \quad \Leftrightarrow \quad Y = \pm \bar{W} Z \quad \text{or equivalently} \quad \bar{Y} = \pm \bar{Z} W.
\]

Therefore,
\[
\sum_{X \in \mathcal{I}_D} (XW) \cdot \overline{ \tau_Y(XZ) } = \left(\sum_{X} X (W Y \bar{Z}) \bar{X} \right) \bar{Y} = d \cdot \delta_{\bar{W} Z, \pm Y} \cdot \bar{Z} W.
\]
Note that the sign ambiguity in \( Y = \pm \bar{W} Z \) does not affect the result, since the same sign appears in \( \bar{Y} \), and the two signs always cancel in the product.

Substituting into \eqref{eq:general-inner-product} gives:
\begin{equation} \label{eq:quaternion-inner}
\left\langle (\rho(g)M)_{ki}, (\rho(g)M)_{\ell j}^{[Y]} \right\rangle_G = \frac{1}{n} \delta_{k\ell} \sum_{s, W,Z} \delta_{\bar{W}Z,\pm Y} M_{si}^{(W)} M_{sj}^{(Z)} \bar{Z} W.
\end{equation}

Summing both sides over \( Y \in \mathcal{I}_D \), we obtain:
\begin{equation} \label{eq:quaternion-trace}
\sum_Y \left\langle (\rho(g)M)_{ki}, (\rho(g)M)_{\ell j}^{[Y]} \right\rangle_G = \frac{1}{n} \delta_{k\ell} \sum_s \overline{M_{si}} M_{sj}.
\end{equation}

Now suppose \( GM \) is a 2-design. Then the left-hand side equals \( \frac{1}{nm} \delta_{k\ell} \delta_{ij} \). Therefore,
\[
\overline{M}^\top M = \frac{1}{m} I_n.
\]

Conversely, assume this identity holds. Then 
$$ (\overline{M}^\top M)_{i,j} = \sum_{s, W,Z} M_{sj}^{(Z)} M_{si}^{(W)} \bar{Z} W = \frac{1}{m}\delta_{ij} $$
comparing the \( Y \)-components of both sides of \eqref{eq:quaternion-inner}, we conclude
\[
\left\langle (\rho(g)M)_{ki}, (\rho(g)M)_{\ell j}^{[Y]} \right\rangle_G = \frac{1}{nm} \delta_{k\ell} \delta_{ij} \delta_{1Y},
\]
and hence \( GM \) is a spherical 2-design.

\medskip

For the final claim, note that if \( \overline{M}^\top M = \frac{1}{m} I_m \), then the columns of \( M \) are orthonormal (up to scale), so \( M = \pi_{n,m}^D(U)/\sqrt{m} \) for some unitary matrix \( U \in \mathrm{O}_D(n) \). Therefore, \( GM \) arises as the image of the scaled orbit \( \{ \rho(g) U/\sqrt{m} \} \) under \( \pi_{n,m}^D \).

\end{proof}

\subsection{Classification of Spherical 2-Design Orbits}

We now combine the results of Subsections~3.3 and~3.4 to obtain a complete classification of spherical 2-designs arising from group orbits in real representations. The key observation is that any real subrepresentation \( V \subset \mathbb{R}[G]_0 \) admits an orthogonal decomposition into isotypic components
\[
V = \bigoplus_i V_i,
\]
where each \( V_i \) is isomorphic to a matrix space \( \mathrm{Mat}_{n_i \times m_i}(D_i) \) for some \( D_i \in \{\mathbb{R}, \mathbb{C}, \mathbb{H}\} \), corresponding to the Frobenius–Schur type of the underlying irreducible representation.

Within each component \( V_i \), the 2-design condition is characterized by the moment identity
\[
\overline{M}_i^\top M_i = \frac{1}{m_i} I_{m_i}
\]
for a representative matrix \( M_i \in \mathrm{Mat}_{n_i \times m_i}(D_i) \). As shown in Section~3.3, such designs arise as projections of scaled orbits in the unitary group \( \mathrm{O}_{D_i}(n_i) \). Section~3.2 then guarantees that dimension-weighted combinations of such componentwise 2-designs yield a global 2-design in \( V \), and that every global design arises in this way.

Thus, Theorem \ref{thm:main2} stated in the introduction follows immediately by applying the bijection from Theorem \ref{thm:orthogonaldecomposition} to the isotypic decomposition of \( V \), with each component governed by the criterion in Theorem \ref{thm:isotypic-2design}.

\bibliographystyle{plain}
\bibliography{reference}

\begin{thebibliography}{10}

\bibitem{bajnok1991construction}
B.~Bajnok.
\newblock Construction of spherical 4- and 5-designs.
\newblock {\em Graphs Combin.}, 7:219--233, 1991.

\bibitem{bajnok1992construction}
B.~Bajnok.
\newblock Construction of spherical t-designs.
\newblock {\em Geom. Dedicata}, 43:167--179, 1992.

\bibitem{bannai1979tight}
E.~Bannai and R.~M. Damerell.
\newblock Tight spherical designs. i.
\newblock {\em J. Math. Soc. Japan}, 31:199--207, 1979.

\bibitem{bannai1980tight}
E.~Bannai and R.~M. Damerell.
\newblock Tight spherical designs. ii.
\newblock {\em J. London Math. Soc.}, 21:13--30, 1980.

\bibitem{bannai1984spherical}
Eiichi Bannai.
\newblock Spherical $t$-designs which are orbits of finite groups.
\newblock {\em Journal of the Mathematical Society of Japan}, 36(2):523--531,
  1984.

\bibitem{bannai2009survey}
Eiichi Bannai and Etsuko Bannai.
\newblock A survey on spherical designs and algebraic combinatorics on spheres.
\newblock {\em European Journal of Combinatorics}, 30(6):1392--1425, 2009.

\bibitem{cohn2007universally}
Henry Cohn and Abhinav Kumar.
\newblock Universally optimal distribution of points on spheres.
\newblock {\em Journal of the American Mathematical Society}, 20(1):99--148,
  2007.

\bibitem{delaharpe2005spherical}
Pierre de~la Harpe and Claude Pache.
\newblock Spherical designs and finite group representations (some results of
  e. bannai).
\newblock {\em Enseign. Math.}, 51:197--206, 2005.

\bibitem{delsarte1977spherical}
Philippe Delsarte, Jean-Marie Goethals, and J.J. Seidel.
\newblock Spherical codes and designs.
\newblock {\em Geometriae Dedicata}, 6(3):363--388, 1977.

\bibitem{goethals1981cubature}
J.-M. Goethals and J.~J. Seidel.
\newblock Cubature formulae, polytopes, and spherical designs.
\newblock In {\em The Geometric Vein}, pages 203--218. 1981.

\bibitem{hoggar1990t}
S.~G. Hoggar.
\newblock t-designs in delsarte spaces.
\newblock In {\em Coding Theory and Design Theory}, volume~21 of {\em IMA Vol.
  Math. Appl.}, pages 144--165. 1990.

\bibitem{sobolev1962cubature}
S.~L. Sobolev.
\newblock Cubature formulas on the sphere invariant under finite groups of
  rotations.
\newblock {\em Dokl. Akad. Nauk SSSR}, 146:310--313, 1962.

\bibitem{womersley2018efficient}
Robert~S. Womersley.
\newblock Efficient spherical designs with good geometric properties.
\newblock {\em Contemporary Computational Mathematics}, 1:1243--1285, 2018.

\end{thebibliography}

\end{document}